\theoremstyle{plain}
\newtheorem{thm}{Theorem}
\newtheorem{lem}[thm]{Lemma}
\newtheorem{cor}[thm]{Corollary}
\title{Asymptotic bounds on renewal process stopping times}
\date{}
\author{Jesse Geneson\\
\small\tt geneson@gmail.com\\
}
\begin{document}
\maketitle

\begin{abstract}
Suppose that i.i.d. random variables $X_{1}, X_{2}, \ldots$ are chosen uniformly from $[0,1]$, and let $f: [0,1] \rightarrow [0,1]$ be an increasing bijection. Define $\mu_{f}$ to be the expected value of $f(X_{i})$ for each $i$. Define the random variable $K_{f}$ be to be minimal so that $\sum_{i = 1}^{K_{f}} f(X_{i}) > t$ and let $N_{f}(t)$ be the expected value of $K_{f}$. We prove that if $c_{f} = \frac{\int_{0}^{1} \int_{f^{-1}(u)}^{1} (f(x)-u) dx du}{\mu_{f}}$, then $N_{f}(t) = \frac{t+c_{f}}{\mu_{f}}+o(1)$. This generalizes a result of \'{C}urgus and Jewett (2007) on the case $f(x) = x$.
\end{abstract}

\section{Introduction}

Renewal theory is a branch of mathematics with applications to waiting time distributions in queueing theory, ruin probabilities in insurance risk theory, the development of the age distribution of a population, and debugging programs \cite{blan, doob}. In this paper, we compare renewal processes, which are simple point processes $0 = x_{0} < x_{1} < x_{2}  < \ldots$ for which the differences $x_{i+1}-x_{i}$ for each $i \geq 0$ form an independent identically distributed sequence.

A famous problem about renewal processes was actually a problem from the 1958 Putnam exam \cite{1}: Select numbers randomly from the interval [0,1] until the sum is greater than $1$. What is the expected number of selections?

The answer is $e$ and solutions have appeared in several papers \cite{1,2,3,4,5}. A more general problem is to find the expected number of selections until the sum is greater than $t$. Let $M(t)$ denote this expected number. In \cite{curgus}, \'{C}urgus and Jewett showed that $M(t) = 2t+\frac{2}{3}+o(1)$ and $M(t) = \sum_{k = 0}^{\left \lceil{t}\right \rceil} \frac{(-1)^{k} (t-k)^{k}}{k!}e^{t-k}$ \cite{curgus}.

An analogous question for products was posed in \cite{vand}: Select numbers randomly from the interval [1,e] until the product is greater than $e$. What is the expected number of selections?

Vandervelde found that the answer is $\frac{e-1}{e}+e^{\frac{1}{e-1}}$ and posed the more general question of finding the number of selections until the product is greater than $e^{t}$ \cite{vand}. Let $N(t)$ denote this expected number. Vandervelde conjectured that $N(t) \leq M(t)$ for all $t \geq 0$.

We prove the conjecture in Section \ref{nlessm}, as well as the fact that $N(t) = (e-1)(t+\frac{e-2}{2})+o(1)$. We use the same proof to obtain the following more general result.

\begin{thm}\label{maint}
Suppose that i.i.d. random variables $X_{1}, X_{2}, \ldots$ are chosen uniformly from $[0,1]$, and let $f: [0,1] \rightarrow [0,1]$ be an increasing bijection. Define $\mu_{f}$ to be the expected value of $f(X_{i})$ for each $i$. Define the random variable $K_{f}$ be to be minimal so that $\sum_{i = 1}^{K_{f}} f(X_{i}) > t$ and let $N_{f}(t)$ be the expected value of $K_{f}$. If $c_{f} = \frac{\int_{0}^{1} \int_{f^{-1}(u)}^{1} (f(x)-u) dx du}{\mu_{f}}$, then $N_{f}(t) = \frac{t+c_{f}}{\mu_{f}}+o(1)$.
\end{thm}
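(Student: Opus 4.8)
The plan is to reduce the statement to the behaviour of the \emph{overshoot} of the associated renewal process and then evaluate the limiting constant. Write $Y_i=f(X_i)$ and $S_n=Y_1+\cdots+Y_n$, so that $K_f=\min\{n:S_n>t\}$, and set $R(t)=S_{K_f}-t$, the amount by which the running sum first passes $t$. Two elementary observations come first. Since $f$ is an increasing bijection of $[0,1]$ it is a homeomorphism with $f(0)=0$, $f(1)=1$, and $\Pr[Y_i\le y]=\Pr[X_i\le f^{-1}(y)]=f^{-1}(y)$ on $[0,1]$; in particular $p:=\Pr[Y_i>1/2]=1-f^{-1}(1/2)>0$, and whenever more than $2t$ of $Y_1,\dots,Y_n$ exceed $1/2$ we have $S_n>t$, so $\Pr[K_f>n]\le\Pr[\mathrm{Bin}(n,p)\le 2t]$ decays exponentially and $\mathbb{E}[K_f]<\infty$. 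Also $R(t)\le Y_{K_f}\le 1$, because $S_{K_f-1}\le t$ by minimality of $K_f$. The first observation together with the boundedness of the $Y_i$ lets me invoke Wald's identity, $\mu_f N_f(t)=\mathbb{E}[S_{K_f}]=t+\mathbb{E}[R(t)]$, i.e.
\[
N_f(t)=\frac{t+\mathbb{E}[R(t)]}{\mu_f},
\]
so the theorem reduces to showing $\mathbb{E}[R(t)]\to c_f$.

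To obtain that limit I would set $q(t)=\mathbb{E}[R(t)]$ and condition on $Y_1$. On $\{Y_1>t\}$ we have $K_f=1$ and $R(t)=Y_1-t$; on $\{Y_1\le t\}$ the process starts afresh from level $t-Y_1$, so $\mathbb{E}[R(t)\mid Y_1]=q(t-Y_1)$ there. This yields the renewal equation $q=h+G*q$ with $h(t)=\mathbb{E}[(Y_1-t)^+]$ and $G=f^{-1}$ the distribution function of $Y_1$, and $q$ is bounded by $1$ so it is the distinguished solution. Both hypotheses of the key renewal theorem are met: $G$ is continuous, hence its law has no atoms and is non-lattice; and $h$ is nonnegative, continuous, and supported on $[0,1]$, hence directly Riemann integrable. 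Therefore $q(t)\to\frac{1}{\mu_f}\int_0^\infty h(s)\,ds$, and by Tonelli $\int_0^\infty h(s)\,ds=\mathbb{E}\!\left[\int_0^{Y_1}(Y_1-s)\,ds\right]=\tfrac12\mathbb{E}[Y_1^2]=\tfrac12\int_0^1 f(x)^2\,dx$.

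The last step is a change of order of integration identifying this with the numerator of $c_f$: since $\int_0^{f(x)}(f(x)-u)\,du=\tfrac12 f(x)^2$, we have $\tfrac12\int_0^1 f(x)^2\,dx=\int_0^1\!\int_0^{f(x)}(f(x)-u)\,du\,dx$, and the region $\{0\le u\le f(x),\ 0\le x\le 1\}$ coincides with $\{f^{-1}(u)\le x\le 1,\ 0\le u\le 1\}$ because $f$ is increasing, so this equals $\int_0^1\!\int_{f^{-1}(u)}^1(f(x)-u)\,dx\,du=c_f\,\mu_f$. Hence $q(t)\to\frac{1}{\mu_f}\,c_f\,\mu_f=c_f$, and substituting into the displayed formula gives $N_f(t)=\frac{t+c_f+o(1)}{\mu_f}=\frac{t+c_f}{\mu_f}+o(1)$. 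The step I expect to be the real obstacle is the $o(1)$ rather than merely $O(1)$: this is exactly the strength of Blackwell's renewal theorem (the conclusion genuinely fails for lattice step distributions), so the argument must run through the non-lattice key renewal theorem rather than the elementary renewal theorem, and the care required is precisely in checking direct Riemann integrability of $h$ and non-latticeness of the law of $f(X_i)$ — both straightforward here, but essential. An equivalent route writes $N_f(t)=1+\sum_{n\ge 1}\Pr[S_n\le t]$ and applies the key renewal theorem to the renewal function after the standard centering, arriving at the same constant.
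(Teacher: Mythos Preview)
Your argument is correct and takes a genuinely different route from the paper. Both proofs begin with Wald's identity to reduce $N_f(t)$ to the expected overshoot $\mathbb{E}[R(t)]$, but then diverge sharply. You set up the renewal equation $q=h+G*q$ with $h(t)=\mathbb{E}[(Y_1-t)^+]$ and invoke the key renewal theorem, checking non-latticeness (automatic since an increasing bijection of $[0,1]$ is a homeomorphism, so the law of $Y_1$ is atomless) and direct Riemann integrability of $h$ (continuous with support in $[0,1]$). This gives $\mathbb{E}[R(t)]\to\tfrac{1}{2\mu_f}\mathbb{E}[Y_1^2]$, which you then identify with $c_f$ by Fubini. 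The paper instead analyses the conditional density $p_i(u)$ of the pre-overshoot gap $t-Y_i$ given $K_f=i+1$ directly: it uses the local limit theorem to show $p_i(u)\to(1-f^{-1}(u))/\mu_f$ uniformly for $i$ in a window of width $O(\sqrt{t})$ around $t/\mu_f$, computes the conditional expected overshoot $O_i(t)$ from this, and then uses a Chernoff bound to show $K_f-1$ falls in that window with high probability.

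Your route is the classical one and is cleaner: the limiting overshoot formula $\mathbb{E}[Y_1^2]/(2\mu_f)$ is a textbook consequence of Blackwell's theorem, and your verification of its hypotheses is complete. The paper's route is more hands-on and in effect re-derives the stationary overshoot density $(1-f^{-1}(u))/\mu_f$ via the local limit theorem rather than citing it; this buys a degree of self-containment (modulo the LLT and Chernoff) and makes the appearance of the integrand $\int_{f^{-1}(u)}^1(f(x)-u)\,dx$ more transparent as a conditional expectation, but at the cost of a longer and more delicate argument. Your closing remark is apt: the $o(1)$ genuinely requires non-lattice renewal input, and both proofs supply it, just through different named theorems.
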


As a corollary, this gives an alternative proof of the main result in \cite{curgus}, which was proved in that paper using results about delay functions.

\begin{cor}
$M(t) = 2t+\frac{2}{3}+o(1)$
\end{cor}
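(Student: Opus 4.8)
The plan is to recast the problem in the language of renewal theory and combine Wald's identity with the key renewal theorem. Set $Y_i = f(X_i)$; because $f$ is an increasing bijection of $[0,1]$ (hence continuous, so its distribution is atomless), the $Y_i$ are i.i.d.\ with common distribution function $F(y) = f^{-1}(y)$ on $[0,1]$ and mean $\mu_f = \int_0^1 f(x)\,dx$. Write $S_n = Y_1 + \dots + Y_n$, so $K_f = \min\{n : S_n > t\}$ is a stopping time for the natural filtration and $S_n \ne t$ almost surely. The first (routine) step is to verify $N_f(t) = E[K_f] < \infty$: since $\{K_f > n\} = \{S_n \le t\}$ and $\mu_f > 0$, for $n > 2t/\mu_f$ we have $P(K_f > n) \le P\!\left(S_n - n\mu_f \le -n\mu_f/2\right)$, which is exponentially small in $n$ by Hoeffding's inequality since the $Y_i$ take values in $[0,1]$; hence $\sum_n P(K_f > n) < \infty$.

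Given finiteness, Wald's identity yields $E[S_{K_f}] = \mu_f E[K_f] = \mu_f N_f(t)$. Introducing the overshoot $R_f(t) := S_{K_f} - t \in (0,1]$, this rearranges to
\[
N_f(t) \;=\; \frac{t + W(t)}{\mu_f}, \qquad W(t) := E[R_f(t)],
\]
so it suffices to prove $W(t) \to c_f$ as $t \to \infty$. Conditioning on $Y_1$ — the overshoot equals $Y_1 - t$ when $Y_1 > t$, and otherwise the process restarts against threshold $t - Y_1$ — shows that $W$ satisfies the renewal equation $W(t) = a(t) + \int_{[0,t]} W(t-y)\,dF(y)$, where $a(t) := E[(Y_1 - t)^+] = \int_0^1 (f(x)-t)^+\,dx$. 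The function $a$ is nonnegative, nonincreasing, supported on $[0,1)$, and $1$-Lipschitz, hence directly Riemann integrable, and $\int_0^\infty a(s)\,ds = E\!\left[\int_0^{Y_1}(Y_1-s)\,ds\right] = \tfrac12 E[Y_1^2]$. Since $F$ is non-lattice, the key renewal theorem gives $W(t) \to \frac{1}{\mu_f}\int_0^\infty a(s)\,ds = \frac{E[Y_1^2]}{2\mu_f} = \frac{1}{2\mu_f}\int_0^1 f(x)^2\,dx$.

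Finally, one identifies this limit with $c_f$ by Fubini: using $f^{-1}(u) \le x \iff u \le f(x)$,
\[
\int_0^1\!\!\int_{f^{-1}(u)}^{1}(f(x)-u)\,dx\,du \;=\; \int_0^1\!\!\int_0^{f(x)}(f(x)-u)\,du\,dx \;=\; \int_0^1 \frac{f(x)^2}{2}\,dx,
\]
so $c_f = \frac{1}{2\mu_f}\int_0^1 f(x)^2\,dx = \lim_{t\to\infty} W(t)$. Substituting $W(t) = c_f + o(1)$ into the displayed formula for $N_f(t)$ gives $N_f(t) = \frac{t+c_f}{\mu_f}+o(1)$; the corollary is the special case $f(x) = x$, where $\mu_f = 1/2$ and $c_f = 1/3$, so $M(t) = 2t + \tfrac23 + o(1)$.

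The step I expect to be the crux is the appeal to the key renewal (Blackwell) theorem for the non-lattice $F$. For a self-contained treatment one should prove the needed instance directly — a standard coupling argument suffices for non-lattice increments, and it is especially clean here since the increments are bounded — after which one obtains $W(t) \to c_f$ from $W = a + a*U$ (with $U = \sum_{n\ge1} F^{*n}$) by passing to the limit using $U(t+1)-U(t)\to 1/\mu_f$, dominated convergence, and the integrability of $a$. The remaining ingredients — Wald's identity, the Hoeffding tail bound, and the Fubini computation — are entirely routine.
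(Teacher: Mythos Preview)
Your proof is correct and takes a genuinely different route from the paper. Both arguments begin with Wald's identity, reducing the problem to showing that the expected overshoot $W(t)=E[S_{K_f}-t]$ converges to $c_f$. From there you write down the renewal equation $W=a+F\ast W$ with $a(t)=E[(Y_1-t)^+]$ and invoke the key renewal theorem to obtain $W(t)\to E[Y_1^{2}]/(2\mu_f)$, then identify this limit with $c_f$ by the Fubini computation. The paper instead analyzes the overshoot by hand: it uses the Local Limit Theorem to approximate the density of $S_i$ near $t$, a Chernoff bound to localize $K_f-1$ to a window of width $O(\sqrt{t})$ about $t/\mu_f$, and then averages the conditional expected overshoot $O_i(t)$ over that window. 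Your route is the classical textbook treatment and is slightly more robust --- it requires only that $F$ be non-lattice with a finite second moment, whereas the Local Limit Theorem as quoted in the paper needs $f(X_1)$ to have a bounded density, which is not guaranteed for an arbitrary increasing bijection $f$ (e.g.\ $f(x)=x^{2}$). The paper's approach, in exchange, is more explicit about the limiting conditional density $(1-f^{-1}(u))/\mu_f$ of the undershoot and does not package the limit into a black-box renewal theorem. For the corollary itself ($f(x)=x$), both approaches go through without difficulty.
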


In Section \ref{sect01}, we find that $N(t) = \frac{e-1}{e}+e^{t-1+\frac{t}{e-1}}$ for $t \in [0,1]$. We prove in Section \ref{secttg1} that $\frac{d}{dt}(N(t)e^{-\frac{e}{e-1}t}) = -\frac{e}{e-1}e^{-\frac{e}{e-1}t} N(t-1)-e^{-\frac{e}{e-1}t}$, and we use this equation to find $N(t)$ for $t \in [1,2]$.

\section{$t \in [0, 1]$}\label{sect01}

The proof for $t \in [0,1]$ is like the proof for $t = 1$ in \cite{vand}.

Let $q_{n} = q_{n}(t)$ be the probability that a product of $n$ numbers chosen from $[1,e]$ is not greater than $e^{t}$. Define $q_{0} = 1$. The probability that the product exceeds $e^{t}$ for the first time at the $n^{th}$ selection is $(1-q_{n})-(1-q_{n-1}) = q_{n-1}-q_{n}$. $N(t)$ is equal to $\sum_{n = 1}^{\infty} n(q_{n-1}-q_{n}) = \sum_{n = 0}^{\infty} q_{n}$. 

For $t \in [0, 1]$ the region $R_{n}(t)$ within the $n$-cube $[1,e]^{n}$ consisting of points $(x_{1},\ldots,x_{n})$, the product of whose coordinates is at most $e^{t}$, is described by $1 \leq x_{1} \leq e^t$, $1 \leq x_{2} \leq \frac{e^t}{x_{1}}$, $\ldots$, $1 \leq x_{n} \leq \frac{e^t}{x_{1}\ldots x_{n-1}}$.

It is easy to see that $q_{n} = \frac{1}{(e-1)^{n}} \int_{R_{n}} dx_{n}\ldots dx_{1}$, so we focus on computing $\Theta_{n} = \int_{R_{n}} dx_{n}\ldots dx_{1}$. Note that $\Theta_{n+1} = \int_{R_{n}} (\frac{e^{t}}{x_{1} \ldots x_{n}}-1) dx_{n} \ldots dx_{1}$. Therefore $\Theta_{n+1}+\Theta_{n} =  \int_{R_{n}} \frac{e^{t}}{x_{1} \ldots x_{n}} dx_{n} \ldots dx_{1}$. 

\begin{lem}
$\Theta_{n} = (-1)^{n}(1-b_{n}e^t )$, where $b_{n} = 1-\frac{t}{1}+\frac{t^{2}}{2}- \ldots +(-1)^{n-1}\frac{ t^{n-1}}{(n-1)!}$
\end{lem}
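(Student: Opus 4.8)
The plan is to convert the additive recurrence $\Theta_{n+1} + \Theta_n = \int_{R_n} \frac{e^t}{x_1\cdots x_n}\,dx_n\cdots dx_1$ recorded above into an explicit recurrence with a closed-form right-hand side, and then solve it by induction on $n$. The key preliminary step is to evaluate $S_n := \int_{R_n} \frac{1}{x_1\cdots x_n}\,dx_n\cdots dx_1$. Substituting $u_i = \ln x_i$ (so that $dx_i = e^{u_i}\,du_i$ and $\frac{1}{x_1\cdots x_n} = e^{-(u_1+\cdots+u_n)}$), the Jacobian cancels the integrand exactly, and the description of $R_n$ in the excerpt becomes the scaled standard simplex $\{u_i \ge 0,\ u_1 + \cdots + u_n \le t\}$. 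Hence $S_n = \frac{t^n}{n!}$, and the recurrence becomes the clean relation $\Theta_{n+1} + \Theta_n = e^t\,\frac{t^n}{n!}$.

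Next I would prove the claimed formula by induction on $n$. For the base case $n=1$ one checks directly that $R_1(t) = [1,e^t]$, so $\Theta_1 = e^t - 1 = (-1)^1(1 - b_1 e^t)$ since $b_1 = 1$; alternatively one may start at $n=0$, interpreting $b_0$ as the empty sum $0$ and noting $\Theta_0 = 1$. For the inductive step, assume $\Theta_n = (-1)^n(1 - b_n e^t)$. Then $\Theta_{n+1} = e^t\,\frac{t^n}{n!} - (-1)^n(1 - b_n e^t) = (-1)^{n+1} + (-1)^n\!\left(b_n + (-1)^n\frac{t^n}{n!}\right)e^t$. It remains only to observe the telescoping identity $b_n + (-1)^n\frac{t^n}{n!} = b_{n+1}$, which is immediate from the definition of $b_n$ as a truncated alternating series; substituting it gives $\Theta_{n+1} = (-1)^{n+1}(1 - b_{n+1}e^t)$, which closes the induction.

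I do not anticipate a serious obstacle. The one genuinely clever point is the evaluation of $S_n$: one must notice that the logarithmic change of variables makes the integrand disappear and simultaneously exposes $R_n$ as a simplex, so that $S_n$ is just a volume. After that, the argument is a one-line induction driven by $b_{n+1} - b_n = (-1)^n t^n/n!$, and the only thing requiring care is the sign bookkeeping together with the convention for $b_1$ (equivalently $b_0$) used in the base case.
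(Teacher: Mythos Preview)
Your proposal is correct and follows essentially the same route as the paper: the logarithmic substitution $u_i=\ln x_i$ cancels the Jacobian, turns the region into the simplex $\{u_i\ge 0,\ \sum u_i\le t\}$ (here $t\le 1$ makes the cube constraints redundant), and yields $\Theta_{n+1}+\Theta_n=e^t t^n/n!$; the induction via $b_{n+1}-b_n=(-1)^n t^n/n!$ is then identical to the paper's, just with the recurrence solved forward rather than verified.
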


\begin{proof}
Make a change of variables $y_{k} = \ln x_{k}$, so $\Theta_{n+1}+\Theta_{n} =  \int_{R_{n}'} e^{t} dy_{n} \ldots dy_{1}$. Clearly $R_{n}'$ consists of the points $(y_{1},\ldots,y_{n})$ satisfying $y_{k} \in [0,1]$ and $y_{1}+\ldots+y_{n} \leq t$. Therefore $\Theta_{n+1}+\Theta_{n} = \frac{e^t t^{n}}{n !}$ for $t \in [0,1]$.

For $n \geq 1$ let $b_{n}$ be the $n^{th}$ partial sum of the Taylor series for $e^{- t}$ centered at $0$, i.e., $1-\frac{t}{1}+\frac{t^{2}}{2}- \ldots +(-1)^{n-1}\frac{t^{n-1}}{(n-1)!}$. We show that $\Theta_{n} = (-1)^{n}(1-b_{n}e^t )$. The quantities agree for $n = 1$. For $n \geq 2$, $\Theta_{n+1}+\Theta_{n} = (-1)^{n}(1-b_{n}e^t )+(-1)^{n+1}(1-b_{n+1}e^t ) = e^t(-1)^{n}(b_{n+1}-b_{n}) = e^t (-1)^{n} \frac{(-1)^{n}t^{n}}{n !} = \frac{e^t t^{n}}{n !}$. 
\end{proof}

Therefore for $t \in [0,1]$, $q_{n} = \frac{(-1)^{n}(1-b_{n}e^t)}{(e-1)^{n}}$. The final step is to calculate the sum of the $q_{n}$.

\begin{thm}\label{n01}
For $t \in [0, 1]$, $N(t) = \frac{e-1}{e}+e^{t-1+\frac{t}{e-1}}$
\end{thm}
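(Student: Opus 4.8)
The plan is to evaluate the series $N(t)=\sum_{n=0}^{\infty} q_{n}$ directly, using the closed form $q_{n}=\frac{(-1)^{n}(1-b_{n}e^{t})}{(e-1)^{n}}$ just obtained. Note first that this formula is in fact valid at $n=0$ as well: $b_{0}$ is the empty sum $0$, so it gives $q_{0}=1$, consistent with the definition. Writing $r=\frac{-1}{e-1}$, I would split $N(t)=\sum_{n=0}^{\infty} r^{n}-e^{t}\sum_{n=0}^{\infty} r^{n}b_{n}$. Since $|r|=\frac{1}{e-1}<1$ and the partial sums $b_{n}$ are uniformly bounded (indeed $b_{n}\to e^{-t}$), both series converge absolutely; this simultaneously justifies that $\sum_{n}q_{n}$ converges.

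The first series is geometric: $\sum_{n=0}^{\infty} r^{n}=\frac{1}{1-r}=\frac{e-1}{e}$, which produces the first summand of the claimed formula. For the second series I would substitute $b_{n}=\sum_{k=0}^{n-1}\frac{(-t)^{k}}{k!}$ and interchange the order of summation (legitimate by absolute convergence), obtaining $\sum_{n=0}^{\infty} r^{n}b_{n}=\sum_{k=0}^{\infty}\frac{(-t)^{k}}{k!}\sum_{n=k+1}^{\infty} r^{n}=\frac{r}{1-r}\sum_{k=0}^{\infty}\frac{(-tr)^{k}}{k!}=\frac{r}{1-r}\,e^{-tr}$, where the inner sum is again geometric and the outer sum is the Taylor series of the exponential.

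Finally I would substitute $r=\frac{-1}{e-1}$, so that $1-r=\frac{e}{e-1}$, hence $\frac{r}{1-r}=-\frac{1}{e}$ and $-tr=\frac{t}{e-1}$. Thus the second series equals $-\frac{1}{e}e^{t/(e-1)}$, and therefore $N(t)=\frac{e-1}{e}+\frac{1}{e}e^{t+\frac{t}{e-1}}=\frac{e-1}{e}+e^{t-1+\frac{t}{e-1}}$, as desired. This argument is essentially all elementary series manipulation; the only points requiring a little care are the treatment of the $n=0$ term and the justification for swapping the order of summation, so the ``hard part'' is really just the bookkeeping rather than any substantive obstacle.
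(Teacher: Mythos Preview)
Your argument is correct and is essentially the same as the paper's: both split $\sum q_{n}$ into a geometric piece giving $\tfrac{e-1}{e}$ and a $b_{n}$-weighted piece, expand $b_{n}$ as its defining sum, swap the order of summation, evaluate the inner geometric tail, and recognize the resulting exponential series. Your version is slightly tidier in that you introduce $r=-1/(e-1)$, fold the $n=0$ term into the sums via $b_{0}=0$, and explicitly note the absolute convergence justifying the interchange, but the method is identical.
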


\begin{proof}
$\sum_{n = 0}^{\infty} q_{n} = 1+\sum_{n = 1}^{\infty}  \frac{(-1)^{n}}{(e-1)^{n}} - \sum_{n = 1}^{\infty}  \frac{(-1)^{n}(b_{n}e^t)}{(e-1)^{n}} = \frac{e-1}{e} - \sum_{n = 1}^{\infty}  \frac{(-1)^{n}(b_{n}e^t)}{(e-1)^{n}}$.

We evaluate the remaining term by writing $b_{n}$ as a sum and interchanging the order of summation.

\noindent $- \sum_{n = 1}^{\infty}  \frac{(-1)^{n}(b_{n}e^t)}{(e-1)^{n}} = -e^t \sum_{n = 1}^{\infty}  \frac{(-1)^{n}}{(e-1)^{n}} \sum_{k = 0}^{n-1}\frac{(-t)^{k}}{k !} =$ \\
$-e^t \sum_{k = 0}^{\infty} \frac{(-t)^{k}}{k !}  \sum_{n = k+1}^{\infty} \frac{(-1)^{n}}{(e-1)^{n}} = -e^t \sum_{k = 0}^{\infty} \frac{(- t)^{k}}{k !}  \frac{(-1)^{k+1}}{e(e-1)^{k}} = \frac{e^t}{e} e^{\frac{t}{e-1}}$.
\end{proof}

\section{$t \geq 1$}\label{secttg1}

In this section, we show that $\frac{d}{dt}(N(t)e^{-\frac{e}{e-1}t}) = -\frac{e}{e-1}e^{-\frac{e}{e-1}t} N(t-1)-e^{-\frac{e}{e-1}t}$ for $t \geq 1$ and calculate $N(t)$ for $t \in [1,2]$. 

\begin{thm}\label{diffeq}
$\frac{d}{dt}(N(t)e^{-\frac{e}{e-1}t}) = -\frac{e}{e-1}e^{-\frac{e}{e-1}t} N(t-1)-e^{-\frac{e}{e-1}t}$
\end{thm}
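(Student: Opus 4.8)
The plan is to derive a renewal-type integral equation for $N(t)$ and then differentiate it with an integrating factor. Conditioning on the first selection $X_{1} = x \in [1,e]$, the count $K$ equals $1$ plus the number of additional selections needed for the product of the remaining numbers to exceed $e^{t}/x = e^{t - \ln x}$, and when $t - \ln x \le 0$ no further selections are needed (this happens only if $t \le 1$). Taking expectations gives
\begin{equation*}
N(t) = 1 + \frac{1}{e-1}\int_{1}^{e} N(t - \ln x)\,dx \qquad (t \geq 1),
\end{equation*}
where $t \geq 1$ guarantees $t - \ln x \geq t - 1 \geq 0$ over the whole range of integration, so the boundary truncation never enters. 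Equivalently, this identity follows by summing the one-step recursion $q_{n}(t) = \frac{1}{e-1}\int_{1}^{e} q_{n-1}(t - \ln x)\,dx$ over $n \geq 1$ and using $N(t) = \sum_{n \geq 0} q_{n}(t)$, the interchange of sum and integral being justified by nonnegativity. Substituting $u = t - \ln x$ rewrites this as
\begin{equation*}
N(t) = 1 + \frac{e^{t}}{e-1}\int_{t-1}^{t} N(u) e^{-u}\,du \qquad (t \geq 1).
\end{equation*}

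Next I would record the regularity needed to differentiate. By Theorem~\ref{n01}, $N$ is given by an explicit smooth formula on $[0,1]$; feeding this into the integral equation shows the right-hand side is continuous on $[1,2]$, hence $N$ is continuous there, and bootstrapping interval by interval gives continuity of $N$ on $[0,\infty)$. Since the integrand $N(u)e^{-u}$ is then continuous, the integral is $C^{1}$ in $t$, and therefore $N$ is $C^{1}$ on $(1,\infty)$ (with a one-sided derivative at $t=1$), so the differentiations below are legitimate.

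Now differentiate. Writing $G(t) = \int_{t-1}^{t} N(u)e^{-u}\,du$, the integral equation reads $N(t) - 1 = \frac{e^{t}}{e-1}G(t)$, equivalently $G(t) = (e-1)e^{-t}(N(t)-1)$. The fundamental theorem of calculus gives $G'(t) = N(t)e^{-t} - N(t-1)e^{-(t-1)}$, while differentiating the other description gives $G'(t) = (e-1)e^{-t}\big(N'(t) - N(t) + 1\big)$. Equating the two and multiplying through by $e^{t}/(e-1)$ yields, after collecting terms, the delay differential equation
\begin{equation*}
N'(t) = \frac{e}{e-1}\big(N(t) - N(t-1)\big) - 1 \qquad (t \geq 1).
\end{equation*}
Finally, since $\frac{d}{dt}\big(N(t)e^{-\frac{e}{e-1}t}\big) = e^{-\frac{e}{e-1}t}\big(N'(t) - \frac{e}{e-1}N(t)\big)$, substituting the displayed formula for $N'(t)$ cancels the $\frac{e}{e-1}N(t)$ terms and leaves exactly $-\frac{e}{e-1}e^{-\frac{e}{e-1}t}N(t-1) - e^{-\frac{e}{e-1}t}$, which is the asserted identity.

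The step I expect to be the main obstacle is establishing the renewal equation rigorously rather than the algebra that follows it: one must handle the boundary contribution (the event that the first selection alone exceeds $e^{t}$, which is absorbed into the constant $1$ and is exactly what the hypothesis $t \geq 1$ makes vanish from the integral), justify the interchange of summation and integration, and separately confirm the continuity and differentiability of $N$ on $[1,\infty)$ needed to apply the fundamental theorem of calculus. Once the integral equation and the regularity of $N$ are in hand, the rest is routine differentiation.
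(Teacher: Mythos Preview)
Your proposal is correct and follows essentially the same route as the paper: establish the renewal equation $N(t)=1+\frac{1}{e-1}\int_{1}^{e}N(t-\ln u)\,du$, change variables to $N(t)=1+\frac{e^{t}}{e-1}\int_{t-1}^{t}N(s)e^{-s}\,ds$, differentiate to obtain $N'(t)=\frac{e}{e-1}(N(t)-N(t-1))-1$, and then apply the integrating factor. The only difference is that the paper defers the proof of the renewal equation to a later section and omits your explicit regularity discussion, but the argument is otherwise identical.
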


\begin{proof}
In the next section we show that $N(t) = 1+\frac{1}{e-1}\int_{1}^{e} N(t-\ln u) du$. If $s = t-\ln u$, then $N(t) = 1 + \frac{1}{e-1}e^{t}\int_{t - 1}^{t} N(s) e^{-s} ds$. Therefore $N'(t) = \frac{e}{e-1}(N(t)- N(t-1))-1$, so $\frac{d}{dt}(N(t)e^{-\frac{e}{e-1}t}) = -\frac{e}{e-1}e^{-\frac{e}{e-1}t} N(t-1)-e^{-\frac{e}{e-1}t}$.
\end{proof}

\begin{thm}
$N(t) = e^{\frac{e}{e-1}t}(-\frac{e-1}{e^{2+\frac{1}{e-1}}}+\frac{1}{e} +\frac{e^{-\frac{e}{e-1}}}{e-1})+\frac{2(e-1)}{e}-\frac{e^{-\frac{e}{e-1}} t e^{\frac{e}{e-1}t}}{e-1}$ for $t \in [1,2]$
\end{thm}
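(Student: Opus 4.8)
The plan is to integrate the relation of Theorem~\ref{diffeq} over $[1,t]$, feeding in the closed form for $N$ on $[0,1]$ from Theorem~\ref{n01} to handle the delay term $N(t-1)$. Throughout, abbreviate $\alpha = \frac{e}{e-1}$. For $t \in [1,2]$ we have $t-1 \in [0,1]$, so Theorem~\ref{n01} gives $N(t-1) = \frac{e-1}{e}+e^{(t-1)-1+\frac{t-1}{e-1}}$. The crucial observation is that $1+\frac{1}{e-1} = \alpha$, so the exponential here is $e^{-2-\frac{1}{e-1}}e^{\alpha t}$ and hence $N(t-1) = \frac{e-1}{e}+e^{-2-\frac{1}{e-1}}e^{\alpha t}$, an elementary function of $t$.

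Substituting this into $-\alpha e^{-\alpha t}N(t-1)-e^{-\alpha t}$ and using $\alpha\cdot\frac{e-1}{e}=1$, the right-hand side of Theorem~\ref{diffeq} collapses to $-2e^{-\alpha t}-\alpha e^{-2-\frac{1}{e-1}}$, with no delay term left. Integrating from $1$ to $t$ yields
\begin{equation*}
N(t)e^{-\alpha t} = N(1)e^{-\alpha}+\frac{2}{\alpha}\bigl(e^{-\alpha t}-e^{-\alpha}\bigr)-\alpha e^{-2-\frac{1}{e-1}}(t-1).
\end{equation*}
For the constant of integration I would take $N(1) = \frac{e-1}{e}+e^{\frac{1}{e-1}}$, the value supplied by Theorem~\ref{n01} at $t=1$ (equivalently, Vandervelde's answer to the original product problem); since $N$ is continuous this is forced. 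Solving for $N(t)$ and separating the term proportional to $te^{\alpha t}$ then gives an expression of the claimed shape.

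What remains is bookkeeping with the exponents, which is the only place any care is needed. One uses $e\cdot e^{-2-\frac{1}{e-1}} = e^{-\frac{e}{e-1}}$ to rewrite $\alpha e^{-2-\frac{1}{e-1}} = \frac{1}{e-1}e^{-\frac{e}{e-1}}$, which puts the $te^{\alpha t}$ term in the stated form $-\frac{e^{-\frac{e}{e-1}}te^{\frac{e}{e-1}t}}{e-1}$; one uses $\frac{2}{\alpha}=\frac{2(e-1)}{e}$ for the additive constant $\frac{2(e-1)}{e}$; and for the coefficient of $e^{\alpha t}$ one combines $N(1)e^{-\alpha} = \frac{e-1}{e}e^{-\alpha}+e^{-1}$ (using $\frac{1}{e-1}-\alpha = -1$) with the other two contributions, noting $\frac{e-1}{e}e^{-\frac{e}{e-1}} = \frac{e-1}{e^{2+\frac{1}{e-1}}}$, to land on $-\frac{e-1}{e^{2+\frac{1}{e-1}}}+\frac{1}{e}+\frac{e^{-\frac{e}{e-1}}}{e-1}$. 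A good final check is that the resulting formula, evaluated at $t=1$, reproduces $\frac{e-1}{e}+e^{\frac{1}{e-1}}$, so that it meets the $[0,1]$ formula continuously; I expect no analytic difficulty beyond this exponent arithmetic.
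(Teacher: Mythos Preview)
Your argument is correct and follows exactly the paper's route: substitute the closed form of $N$ on $[0,1]$ into the differential identity of Theorem~\ref{diffeq} to reduce the right-hand side to $-2e^{-\alpha t}-\frac{e^{-\alpha}}{e-1}$, integrate, and fix the constant using $N(1)$. The only thing the paper leaves implicit that you make explicit is the use of continuity of $N$ at $t=1$ to match the constant of integration, and your exponent bookkeeping is all correct.
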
 

\begin{proof}
By Theorems \ref{n01} and \ref{diffeq}, $\frac{d}{dt}(N(t)e^{-\frac{e}{e-1}t}) = -2e^{-\frac{e}{e-1}t}-\frac{e^{-\frac{e}{e-1}}}{e-1}$. Therefore for $t \in [1, 2]$, $N(t)e^{-\frac{e}{e-1}t} = C + \frac{2(e-1)}{e} e^{-\frac{e}{e-1}t}-\frac{e^{-\frac{e}{e-1}}t}{e-1}$ for a constant $C = -\frac{e-1}{e^{2+\frac{1}{e-1}}}+\frac{1}{e} +\frac{e^{-\frac{e}{e-1}}}{e-1}$. In other words, $N(t) = e^{\frac{e}{e-1}t}(-\frac{e-1}{e^{2+\frac{1}{e-1}}}+\frac{1}{e} +\frac{e^{-\frac{e}{e-1}}}{e-1})+\frac{2(e-1)}{e}-\frac{e^{-\frac{e}{e-1}} t e^{\frac{e}{e-1}t}}{e-1}$.
\end{proof}

For each integer $i \geq 2$, $N(t)$ can be calculated similarly for $t \in [i,i+1]$ based on the values of $N(t)$ for $t \in [i-1,i]$ using the fact that $\frac{d}{dt}(N(t)e^{-\frac{e}{e-1}t}) = -\frac{e}{e-1}e^{-\frac{e}{e-1}t} N(t-1)-e^{-\frac{e}{e-1}t}$.

\section{Bounds on $N(t)$}\label{nlessm}

The results in this section use the fact that $\ln(1+(e-1)t) \geq t$ for $t \in [0,1]$.

\begin{lem}
$\ln(1+(e-1)t) \geq t$ for $t \in [0,1]$
\end{lem}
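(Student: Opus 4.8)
The plan is to exploit the concavity of the natural logarithm. First I would observe that the affine map $t \mapsto 1 + (e-1)t$ carries $[0,1]$ bijectively onto $[1,e]$, sending $t = 0$ to $u = 1$ and $t = 1$ to $u = e$. Next I would write down the chord of the graph of $\ln$ joining $(1, \ln 1) = (1, 0)$ and $(e, \ln e) = (e, 1)$; this chord is the line $\ell(u) = \frac{u-1}{e-1}$, and substituting $u = 1 + (e-1)t$ gives exactly $\ell(1+(e-1)t) = \frac{(e-1)t}{e-1} = t$. Since $\ln$ is concave on $(0,\infty)$, its graph lies on or above every chord, so $\ln u \geq \ell(u)$ for all $u \in [1,e]$; specializing to $u = 1+(e-1)t$ yields $\ln(1+(e-1)t) \geq t$ for $t \in [0,1]$, which is the claim.

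Alternatively, one can argue directly with $g(t) = \ln(1+(e-1)t) - t$. I would check the boundary values $g(0) = \ln 1 = 0$ and $g(1) = \ln e - 1 = 0$, then compute $g''(t) = -\frac{(e-1)^2}{(1+(e-1)t)^2} < 0$ on $[0,1]$. A function that is concave on an interval and vanishes at both endpoints is nonnegative throughout, giving $g \geq 0$ and hence the inequality. If one prefers not to invoke that principle, note $g'(t) = \frac{e-1}{1+(e-1)t} - 1$ is strictly decreasing, positive at $t = 0$ (it equals $e-2 > 0$) and negative at $t = 1$ (it equals $-\frac{1}{e}$), so $g$ increases then decreases on $[0,1]$; together with $g(0) = g(1) = 0$ this forces $g \geq 0$.

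There is essentially no obstacle here: the statement is an elementary convexity fact, and the only real choice is which of the equivalent routes to present. I would go with the concavity-of-$\ln$ / chord argument, since it is the shortest and makes transparent why the constant $e-1$ and the endpoints $t = 0,1$ enter exactly as they do — which is presumably why this particular inequality is the form needed for the bounds in Section~\ref{nlessm}.
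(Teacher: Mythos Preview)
Your proposal is correct. The paper's proof is precisely your third alternative: it sets $f(t)=\ln(1+(e-1)t)-t$, computes $f'(t)=\frac{e-1}{1+(e-1)t}-1$, locates the unique critical point at $t=\frac{e-2}{e-1}$, observes $f'>0$ to its left and $f'<0$ to its right, and combines this with $f(0)=f(1)=0$ to conclude $f\geq 0$. Your preferred chord-below-the-concave-graph presentation is an equally valid and arguably cleaner packaging of the same content; it makes the role of the endpoints and the constant $e-1$ transparent, whereas the paper's version is a more hands-on single-variable calculus computation. There is no substantive difference in difficulty or generality between the two.
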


\begin{proof}
Define $f(t) = \ln(1+(e-1)t) - t$ for $t \in [0,1]$. Then $f'(t) = \frac{e-1}{1+(e-1)t}-1$, so $f'(\frac{e-2}{e-1}) = 0$. Clearly $f'(t) > 0$ for $t \in [0,  \frac{e-2}{e-1})$, $f'(t) < 0$ for $t \in (\frac{e-2}{e-1}, 1]$, and $f(0) = f(1) = 0$. Therefore $f(t) \geq 0$ for $t \in [0,1]$.
\end{proof}

The proof of the $N(t)$ recurrence is like the proof of the $M(t)$ recurrence in \cite{curgus}. Let $I = [0,1]$ and define $B_{0,t} = I^{\mathbb{N}}$. For each $n \in \mathbb{N}$, define $B_{n,t} = \left\{ x \in I^{\mathbb{N}} : \ln(1+(e-1)x_{1})+\ldots+\ln(1+(e-1)x_{n}) \leq t \right\}$. Let $B = \cup_{k = 1}^{\infty} \cap_{n = 1}^{\infty} B_{n, k}$. Clearly the measure of $B$ in $I^{\mathbb{N}}$ is $0$, since $\ln(1+(e-1)x_{1})+\ldots+\ln(1+(e-1)x_{n}) \geq x_{1} + \ldots + x_{n}$. 

\begin{thm}
$N(t) = 1+\frac{1}{e-1}\int_{1}^{e} N(t-\ln u) du$
\end{thm}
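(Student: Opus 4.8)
The plan is to condition on the value of the first selection $X_1$, which is chosen uniformly from $[1,e]$, and use the memorylessness of the renewal process. Write $N(t)$ as the expected number of selections needed for the product to exceed $e^t$. After the first selection $X_1 = u \in [1,e]$, the running product is $u$, and we need the product of the \emph{subsequent} selections to exceed $e^t / u = e^{t - \ln u}$. Since the subsequent selections $X_2, X_3, \ldots$ are i.i.d.\ copies of the same distribution, the conditional expected number of \emph{additional} selections needed is exactly $N(t - \ln u)$ when $t - \ln u > 0$, and is $0$ when $t - \ln u \le 0$ (i.e.\ when the first selection already pushes the product past $e^t$). Including the first selection itself, the conditional expected total is $1 + N(t - \ln u)$ in the first case and $1$ in the second. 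Here I use the convention $N(s) = 0$ for $s \le 0$, which is consistent since for $s < 0$ one selection (indeed the empty product already exceeds $e^s$ when $s<0$, but the process is defined to make at least one selection, so $N(s)=0$ is the natural reading when the product $1 > e^s$).

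Next I would integrate over the uniform distribution of $X_1$ on $[1,e]$, whose density is $\frac{1}{e-1}$ on that interval. This gives
\[
N(t) = \int_1^e \frac{1}{e-1}\bigl(1 + N(t - \ln u)\bigr)\,du = 1 + \frac{1}{e-1}\int_1^e N(t-\ln u)\,du,
\]
where the $N(t-\ln u)$ term is interpreted as $0$ on the sub-range of $u$ with $\ln u \ge t$; for $t \ge 1$ this sub-range is empty since $\ln u \le 1 \le t$, so for $t\ge 1$ the formula is exactly the stated one with no truncation needed, and this is the regime in which Theorem~\ref{diffeq} applies it.

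The one genuinely non-trivial point — and the reason the paper set up the sets $B_{n,t}$ and the measure-zero set $B$ just before this theorem — is \emph{justifying that conditioning is valid}: one must know that $K_f$ (here the product-version stopping time) is almost surely finite, so that the law of total expectation applies and the recurrence is an identity between finite quantities rather than a formal manipulation of possibly-infinite sums. This is where I would invoke the preceding discussion: the set of sequences $x \in I^{\mathbb{N}}$ for which $\sum \ln(1+(e-1)x_i)$ stays bounded (equivalently, the product of selections from $[1,e]$ never exceeds $e^t$) has measure $0$, because $\ln(1+(e-1)x_i) \ge x_i$ by the Lemma and $\sum x_i = \infty$ a.s.\ by the strong law. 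Hence the stopping time is finite a.s.\ and $N(t) < \infty$, so the conditioning argument is rigorous. I expect this finiteness/measurability bookkeeping — rather than the computation, which is a one-line change of perspective — to be the main obstacle, and it is exactly what the machinery immediately preceding the statement is designed to handle.
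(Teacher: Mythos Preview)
Your proposal is correct and is essentially the paper's proof: the paper also conditions on the first coordinate, writes $F_t(w;v)=1+F_{t-\ln(1+(e-1)w)}(v)$ for $t\ge 1$, invokes Fubini on $I^{\mathbb{N}}$ (your ``law of total expectation''), and then substitutes $u=1+(e-1)w$ to reach the stated integral; the a.s.\ finiteness of the stopping time via the measure-zero set $B$ is exactly the technical point you flagged. The only cosmetic difference is that the paper parameterizes the first draw by $w\in[0,1]$ and changes variables at the end, whereas you work directly on $[1,e]$.
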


\begin{proof}
Let $t \geq 0$ and define the random variable $F_{t}: I^{\mathbb{N}} \rightarrow \mathbb{N} \cup \left\{\infty \right\}$ by $F_{t}(x) = min \left\{ n \in \mathbb{N}: \ln(1+(e-1)x_{1})+\ldots+\ln(1+(e-1)x_{n}) > t \right\}$, with $min \emptyset = \infty$. Since $B$ has measure $0$ and $F_{t}^{-1}(\left\{ \infty \right\}) = \cap_{n = 1}^{\infty} B_{n, t} \subset B$, $F_{t}$ is finite almost everywhere on $I^{\mathbb{N}}$. 

For $n \in \mathbb{N}$, $F_{t}^{-1}(\left\{ n \right\}) = B_{n-1,t} - B_{n,t}$. Thus $F_{t}$ is a Borel function and $N(t) = \int_{I^{\mathbb{N}}} F_{t}(x) dx$. If $t \geq 1$ and $x = (x_{1},x_{2},x_{3},\ldots) = (w, v_{1},v_{2},\ldots) = (w; v) \in I^{\mathbb{N}}$, then $2 \leq F_{t}(x) \leq \infty$ and $F_{t}(x) = F_{t}(w; v) = 1+F_{t-\ln(1+(e-1)w)}(v)$. 

By Fubini's theorem, $N(t) = \int_{I^{\mathbb{N}}} F_{t}(x) dx = \int_{0}^{1} \int_{I^{\mathbb{N}}} F_{t}(w; v) dv dw = \int_{0}^{1} (1+\int_{I^{\mathbb{N}}} F_{t-\ln(1+(e-1)w)}(v) dv) dw = 1+\int_{0}^{1} N(t-\ln(1+(e-1)w)) dw$. If $u = 1+(e-1)w$, then $N(t) = 1+\frac{1}{e-1}\int_{1}^{e} N(t-\ln u) du$.
\end{proof}

\begin{thm}
$M(t) \geq N(t)$ for all $t \geq 0$
\end{thm}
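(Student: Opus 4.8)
The plan is to derive parallel integral recurrences for $M(t)$ and $N(t)$ and compare them term by term. Recall that $M(t)$ satisfies the analogous recurrence $M(t) = 1 + \int_0^1 M(t-w)\,dw$ for $t \geq 1$, which after the substitution $u = 1 - w$ reads $M(t) = 1 + \int_0^1 M(t-u)\,du$; meanwhile the theorem just proved gives $N(t) = 1 + \frac{1}{e-1}\int_1^e N(t-\ln u)\,du$, and substituting $u = 1+(e-1)w$ rewrites this as $N(t) = 1 + \int_0^1 N(t-\ln(1+(e-1)w))\,dw$. So both functions satisfy $G(t) = 1 + \int_0^1 G(t - g(w))\,dw$ where $g(w) = w$ for $M$ and $g(w) = \ln(1+(e-1)w)$ for $N$. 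The key structural input is the already-established lemma $\ln(1+(e-1)w) \geq w$ for $w \in [0,1]$: every single draw "eats" at least as much in the $N$-process as in the $M$-process.

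First I would settle the base case. For $t \in [0,1]$ we have the closed forms $M(t) = \sum_{k=0}^{\lceil t\rceil} \frac{(-1)^k(t-k)^k}{k!}e^{t-k}$ (in fact $M(t) = e^t$ on $[0,1]$) and $N(t) = \frac{e-1}{e} + e^{t-1+\frac{t}{e-1}}$ from Theorem \ref{n01}; one checks directly that $M(t) - N(t) \geq 0$ on $[0,1]$, with equality only at isolated points. Actually it is cleaner to observe that for $t \in [0,1]$, $M(t) = 1 + \int_0^1 M(t-w)\mathbf{1}[w \leq t]\,dw + \int_t^1 1\cdot dw$ and similarly for $N$, so the recurrence form holds on all of $[0,\infty)$ if we set $M(s) = N(s) = 1$ for $s < 0$; then the comparison is uniform.

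Next I would run an induction on $\lceil t \rceil$. Suppose $M(s) \geq N(s)$ for all $s \leq t-1$ (the hypothesis covers $s<0$ trivially since both equal $1$ there, and $s\in[0,t-1]$ by the inductive step). For $t$ in the next unit interval, write
\[
M(t) - N(t) = \int_0^1 \big( M(t-w) - N(t-\ln(1+(e-1)w)) \big)\,dw.
\]
Split each integrand as $\big(M(t-w) - N(t-w)\big) + \big(N(t-w) - N(t-\ln(1+(e-1)w))\big)$. The first bracket is $\geq 0$ by the induction hypothesis (note $t - w \leq t$, but we need $\leq t-1$ — see the obstacle below). The second bracket is $\geq 0$ provided $N$ is nondecreasing, because $\ln(1+(e-1)w) \geq w$ means $t - \ln(1+(e-1)w) \leq t-w$. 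So I would insert a preliminary lemma: $N$ is nondecreasing on $[0,\infty)$ (clear probabilistically — raising the target $t$ cannot decrease the expected number of selections — or from the recurrence by induction), and likewise $M$ is nondecreasing.

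The main obstacle is the index bookkeeping: in the integral $\int_0^1 M(t-w)\,dw$ the argument $t-w$ ranges over $[t-1,t]$, not $[0,t-1]$, so a naive induction on $\lceil t\rceil$ does not immediately apply to the first bracket $M(t-w)-N(t-w)$ when $t-w > t-1$, i.e. near $w=0$. The fix is to prove the statement by strong induction on $n = \lceil t \rceil$ but phrased as: $M(s) \geq N(s)$ for all $s \in [0,n]$. Then for $t \in [n-1,n]$ and $w \in [0,1]$ we have $t - w \in [n-2, n] \subseteq [0,n]$... which still overshoots. The genuinely clean route is a continuity/minimality argument instead: let $h(t) = M(t) - N(t)$, which is continuous, with $h(0) = 1 - \frac{e-1}{e} - 1 < 0$? — recheck: $N(0) = \frac{e-1}{e} + e^{-1} = 1$, so $h(0) = M(0) - N(0) = 1 - 1 = 0$. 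Suppose for contradiction $h(t_0) < 0$ for some $t_0$; let $t^* = \inf\{t : h(t) < 0\}$, so $h \geq 0$ on $[0,t^*]$, $h(t^*) = 0$, and there are points just above $t^*$ with $h < 0$. For such $t$ slightly above $t^*$, the recurrence gives $h(t) = \int_0^1 [M(t-w) - N(t-\ln(1+(e-1)w))]\,dw \geq \int_0^1 [N(t-w) - N(t-\ln(1+(e-1)w))]\,dw + \int_0^1 [M(t-w)-N(t-w)]\,dw$; the first integral is $\geq 0$ by monotonicity of $N$, and for the second, $t - w < t$, and we can choose $t - t^*$ small enough that $t - w$ stays in a region where $h \geq 0$ except on a short interval near $w \to 0^+$ of length $t - t^*$, on which $M(t-w) - N(t-w)$ is bounded below by a constant times $-(t-t^*)$; letting $t \downarrow t^*$ the deficit is $o(t-t^*)$ while... hmm, this needs the derivative comparison. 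I would actually handle it by differentiating: from $M'(t) = M(t) - M(t-1)$ and $N'(t) = \frac{e}{e-1}(N(t) - N(t-1)) - 1$ one gets a Grönwall-type inequality for $h$, but the cleanest is: show $h(t^*) = 0$ forces $h'(t^*{}^+) \geq 0$ strictly via the recurrence, contradicting that $h$ goes negative. The expected main difficulty, then, is converting "each step of $N$ advances further" into the functional inequality without circular dependence on monotonicity — resolved by first proving monotonicity of both $M$ and $N$ independently, then doing the infimum-of-bad-set argument with the explicit base-case values on $[0,1]$.
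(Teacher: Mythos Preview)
Your approach is genuinely different from the paper's, and far more laborious. The paper gives a one-line coupling argument: on the common probability space $I^{\mathbb{N}}$, define $F_t(x) = \min\{n : \sum_{i=1}^n \ln(1+(e-1)x_i) > t\}$ and $G_t(x) = \min\{n : \sum_{i=1}^n x_i > t\}$. Since $\ln(1+(e-1)x_i) \geq x_i$ for each $i$, the partial sums appearing in $F_t$ dominate those in $G_t$ termwise, so $F_t(x) \leq G_t(x)$ for \emph{every} sample point $x$. Integrating gives $N(t) \leq M(t)$ immediately. You essentially articulate this insight yourself (``each step of $N$ advances further'') but then translate it into the functional recurrences rather than applying it directly to the stopping times.

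Your recurrence-and-infimum route can be made to work, but as written it is incomplete. Two remarks. First, the correct extension for the unified recurrence is $M(s) = N(s) = 0$ for $s < 0$, not $1$: with that convention $G(t) = 1 + \int_0^1 G(t - g(w))\,dw$ holds for all $t \geq 0$, since the integrand vanishes when $g(w) > t$. Second, the infimum argument does close, and more cleanly than your sketch suggests: if $t^* = \inf\{t : h(t) < 0\}$ and $m(\epsilon) = \inf_{[t^*, t^* + \epsilon]} h$, then for any $t \in [t^*, t^* + \epsilon]$ the recurrence together with monotonicity of $N$ gives
\[
h(t) \;\geq\; \int_0^{t - t^*} h(t - w)\,dw \;\geq\; (t-t^*)\,m(\epsilon) \;\geq\; \epsilon\, m(\epsilon)
\]
(the last step using $m(\epsilon) \leq 0$), whence $m(\epsilon) \geq \epsilon\, m(\epsilon)$ and so $m(\epsilon) \geq 0$ for any $\epsilon < 1$, a contradiction. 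No differentiation or Gr\"onwall is needed. So your strategy is salvageable, but the paper's pointwise coupling bypasses all of this machinery.
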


\begin{proof}
As in the last proof, define $F_{t}: I^{\mathbb{N}} \rightarrow \mathbb{N} \cup \left\{\infty \right\}$ so that $F_{t}(x) = min \left\{ n \in \mathbb{N}: \ln(1+(e-1)x_{1})+\ldots+\ln(1+(e-1)x_{n}) > t \right\}$. Moreover, define $G_{t}: I^{\mathbb{N}} \rightarrow \mathbb{N} \cup \left\{\infty \right\}$ by $G_{t}(x) = min \left\{ n \in \mathbb{N}: x_{1}+\ldots+x_{n} > t \right\}$. Since $\ln(1+(e-1)t) \geq t$ for $t \in [0,1]$, then $F_{t}(x) \leq G_{t}(x)$ for all $t \geq 0$ and $x \in I^{\mathbb{N}}$. Thus $N(t) \leq M(t)$ for all $t \geq 0$.
\end{proof}

We use Wald's equation to derive bounds on $N(t)$.

\begin{thm}
(Wald's equation) Let $X_{1}, X_{2}, \ldots$ be i.i.d. random variables with common finite mean, and let $\tau$ be a stopping time which is independent of $X_{\tau+1},X_{\tau+2},\ldots$ for which $E(\tau) < \infty$. Then $E(X_{1}+\ldots+X_{\tau}) = E(\tau)E(X_{1})$.
\end{thm}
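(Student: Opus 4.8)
The plan is to write the random sum as an indicator-weighted series over a deterministic index set and then exchange expectation with summation. First I would observe that, since $E(\tau)<\infty$, we have $\tau<\infty$ almost surely, and on that event
\[
X_{1}+\cdots+X_{\tau}=\sum_{n=1}^{\infty} X_{n}\,\mathbf{1}_{\{\tau\geq n\}}.
\]
The key point is that for each fixed $n$ the event $\{\tau\geq n\}$ is the complement of $\{\tau\leq n-1\}$, which is determined by $X_{1},\dots,X_{n-1}$ because $\tau$ is a stopping time; as the $X_{i}$ are i.i.d., $\{\tau\geq n\}$ is therefore independent of $X_{n}$, so that $E\!\left[X_{n}\mathbf{1}_{\{\tau\geq n\}}\right]=E(X_{1})\,P(\tau\geq n)$.

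Next I would justify interchanging expectation and the infinite sum. Running the same independence argument with $|X_{n}|$ in place of $X_{n}$ gives $\sum_{n=1}^{\infty}E\!\left[|X_{n}|\mathbf{1}_{\{\tau\geq n\}}\right]=E(|X_{1}|)\sum_{n=1}^{\infty}P(\tau\geq n)=E(|X_{1}|)\,E(\tau)<\infty$, where I use the elementary identity $E(\tau)=\sum_{n\geq 1}P(\tau\geq n)$ valid for nonnegative integer-valued $\tau$. This absolute summability lets Fubini's theorem (equivalently, dominated convergence applied to the partial sums $\sum_{n\leq N}X_{n}\mathbf{1}_{\{\tau\geq n\}}$) license the exchange, yielding
\[
E(X_{1}+\cdots+X_{\tau})=\sum_{n=1}^{\infty}E(X_{1})\,P(\tau\geq n)=E(X_{1})\,E(\tau).
\]

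This argument is essentially routine; the only steps needing genuine care are the integrability bound that makes the interchange of summation and expectation legitimate --- this is exactly where the hypotheses $E|X_{1}|<\infty$ and $E(\tau)<\infty$ are used, and without them the rearrangement can fail --- and the verification that $\{\tau\geq n\}$ is independent of $X_{n}$, which is the role played by the stopping-time (independence) assumption on $\tau$. If one prefers to avoid Fubini for signed summands, I would instead prove the identity first for nonnegative $X_{i}$ by monotone convergence and then subtract the results for the positive and negative parts, the finiteness just established ensuring no $\infty-\infty$ ambiguity arises.
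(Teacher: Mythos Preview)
Your argument is the standard, correct proof of Wald's equation: decompose the random sum via indicators $\mathbf{1}_{\{\tau\geq n\}}$, use the stopping-time property to get independence of $\{\tau\geq n\}$ from $X_n$, and justify the interchange of sum and expectation by the absolute-summability bound $E|X_1|\,E(\tau)<\infty$. All the steps are sound.

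As for comparison with the paper: there is nothing to compare. The paper simply \emph{states} Wald's equation as a well-known tool and gives no proof of it; it is invoked only to derive the bounds on $N(t)$ and the asymptotic formula. So your write-up supplies a proof where the paper deliberately omits one, and the route you chose is the classical one.
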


\begin{lem}
For all $t \geq 0$, $(e-1)t < N(t) \leq (e-1)(t+1)$.
\end{lem}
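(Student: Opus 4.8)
The plan is to apply Wald's equation to the i.i.d.\ sequence $Y_{i} = \ln(1+(e-1)x_{i})$, with $x_{1},x_{2},\ldots$ uniform on $[0,1]$, together with the stopping time $F_{t}$ already defined above, which was shown to satisfy $N(t) = \int_{I^{\mathbb{N}}} F_{t}(x)\,dx = E(F_{t})$ and to be finite almost everywhere. First I would compute $E(Y_{i})$: substituting $u = 1+(e-1)x$ gives $E(Y_{i}) = \int_{0}^{1} \ln(1+(e-1)x)\,dx = \frac{1}{e-1}\int_{1}^{e} \ln u\,du = \frac{1}{e-1}$.

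Next I would check that Wald's equation is applicable. The event $\{F_{t} = n\} = B_{n-1,t}-B_{n,t}$ is determined by $x_{1},\ldots,x_{n}$, so $F_{t}$ is a stopping time for the natural filtration and hence is independent of $x_{F_{t}+1}, x_{F_{t}+2}, \ldots$; moreover $E(F_{t}) = N(t) < \infty$, since $N(t) \le M(t) < \infty$ (alternatively, finiteness of $N(t)$ for every $t$ follows by induction on $\lceil t\rceil$ from Theorem~\ref{n01} and the recurrence $N(t) = 1+\frac{1}{e-1}\int_{1}^{e} N(t-\ln u)\,du$, because $t-\ln u \in [t-1,t]$ for $u \in [1,e]$). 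Wald's equation then yields $E(Y_{1}+\ldots+Y_{F_{t}}) = E(F_{t})\,E(Y_{1}) = \frac{N(t)}{e-1}$.

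The remaining ingredient is a deterministic two-sided bound on the stopped sum $W := Y_{1}+\ldots+Y_{F_{t}}$. By minimality of $F_{t}$ the partial sum $Y_{1}+\ldots+Y_{F_{t}-1}$ is at most $t$ (it is the empty sum, equal to $0\le t$, when $F_{t}=1$), and $Y_{F_{t}} = \ln(1+(e-1)x_{F_{t}}) \le \ln e = 1$, so $W \le t+1$ almost everywhere; on the other hand $W > t$ almost everywhere by the definition of $F_{t}$. Taking expectations, $\frac{N(t)}{e-1} = E(W) \le t+1$, which gives the upper bound $N(t) \le (e-1)(t+1)$. For the strict lower bound, set $V := W - t$: then $0 \le V \le 1$ and $V > 0$ almost everywhere, so $E(V) > 0$ (a nonnegative random variable that is almost surely positive has positive expectation), whence $\frac{N(t)}{e-1} = t + E(V) > t$, i.e.\ $(e-1)t < N(t)$.

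I expect the main obstacle to be the bookkeeping surrounding Wald's equation, namely confirming that $N(t) < \infty$ for all $t$ and that $F_{t}$ genuinely meets the stated hypotheses; the analytic estimates themselves are routine once the stopped sum $W$ is squeezed between $t$ and $t+1$.
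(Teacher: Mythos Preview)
Your proposal is correct and follows essentially the same route as the paper: apply Wald's equation to the stopped sum of the $Y_i=\ln(1+(e-1)X_i)$, use $\mu=E(Y_1)=\frac{1}{e-1}$, and squeeze the stopped sum between $t$ and $t+1$. You supply more detail than the paper does (the explicit computation of $\mu$, the verification of the Wald hypotheses, and the justification of the strict inequality via $E(V)>0$), but the argument is the same.
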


\begin{proof}
Suppose that i.i.d. random variables $X_{1}, X_{2}, \ldots$ are chosen uniformly from $[0,1]$. Define $\mu$ to be the expected value of $\ln(1+(e-1)X_{i})$ for each $i$. Define the random variable $K$ to be minimal so that $\sum_{i = 1}^{K} \ln(1+(e-1)X_{i}) > t$ and define $S(t)$ to be the expected value of $\sum_{i = 1}^{K} \ln(1+(e-1)X_{i})$. By definition, $N(t)$ is the expected value of $K$. 

By Wald's equation, $N(t) = S(t) / \mu$, so $N(t) = (e-1)S(t)$. Since $t < S(t) \leq t+1$, then $(e-1)t < N(t) \leq (e-1)(t+1)$.
\end{proof}

In order to prove that $N(t) = (e-1)(t+\frac{e-2}{2})+o(1)$, we use two more well-known results.

\begin{thm}
(Chernoff's bound) Suppose $X_{1}, X_{2}, X_{3}, \ldots$ are i.i.d. random variables such that $0 \leq X_{i} \leq 1$ for all $i$. Set $S_{n} =\sum_{i = 1}^{n} X_{i}$ and $\mu = E(S_{n})$. Then, for all $\delta > 0$, $Pr(|S_{n} - \mu | \geq \delta \mu) \leq 2e^{-\frac{\delta^2 \mu}{2+\delta}}$.
\end{thm}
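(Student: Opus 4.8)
The plan is to use the exponential moment method that gives Chernoff's bound its name. I would first split the two-sided event by the union bound,
$$\Pr(|S_n - \mu| \geq \delta\mu) \leq \Pr(S_n \geq (1+\delta)\mu) + \Pr(S_n \leq (1-\delta)\mu),$$
and bound each tail separately. For the upper tail, fix $\lambda > 0$ and apply Markov's inequality to the nonnegative random variable $e^{\lambda S_n}$, obtaining $\Pr(S_n \geq (1+\delta)\mu) \leq E(e^{\lambda S_n}) e^{-\lambda(1+\delta)\mu}$; for the lower tail I would apply the same idea to $e^{-s S_n}$ with $s > 0$.

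The next step is to control the moment generating function using the hypothesis $0 \leq X_i \leq 1$. Since $e^{\lambda x}$ is convex, on $[0,1]$ it lies below the chord joining its endpoints, so $e^{\lambda X_i} \leq 1 + X_i(e^\lambda - 1)$; taking expectations and writing $\mu_1 = E(X_1)$ for the common mean, so that $\mu = n\mu_1$, gives $E(e^{\lambda X_i}) \leq 1 + \mu_1(e^\lambda - 1) \leq e^{\mu_1(e^\lambda-1)}$, where the last step uses $1 + y \leq e^y$. By independence $E(e^{\lambda S_n}) = \prod_{i=1}^{n} E(e^{\lambda X_i}) \leq e^{\mu(e^\lambda - 1)}$. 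Substituting and minimizing the exponent $\mu(e^\lambda - 1) - \lambda(1+\delta)\mu$ over $\lambda$ (the minimizer is $\lambda = \ln(1+\delta)$) yields
$$\Pr(S_n \geq (1+\delta)\mu) \leq \exp\!\big(-\mu[(1+\delta)\ln(1+\delta) - \delta]\big).$$
The analogous computation for the lower tail, with $s = -\ln(1-\delta)$ when $0 < \delta < 1$, gives $\Pr(S_n \leq (1-\delta)\mu) \leq \exp\!\big(-\mu[\delta + (1-\delta)\ln(1-\delta)]\big)$.

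The main obstacle, and the real content, is to replace these sharp but unwieldy exponents by the clean form $\frac{\delta^2}{2+\delta}$. For the upper tail I would prove $(1+\delta)\ln(1+\delta) - \delta \geq \frac{\delta^2}{2+\delta}$ for all $\delta > 0$ by letting $\phi(\delta)$ denote the difference and differentiating twice: a direct computation gives $\phi''(\delta) = \frac{1}{1+\delta} - \frac{8}{(2+\delta)^3}$, which is nonnegative because $(2+\delta)^3 - 8(1+\delta) = \delta(\delta^2 + 6\delta + 4) \geq 0$ for $\delta \geq 0$. Since $\phi(0) = \phi'(0) = 0$ and $\phi'' \geq 0$, integrating up twice gives $\phi \geq 0$, whence $\Pr(S_n \geq (1+\delta)\mu) \leq e^{-\mu\delta^2/(2+\delta)}$. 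For the lower tail it suffices to reach the weaker bound with exponent $\frac{\delta^2}{2}$, since $\frac{\delta^2}{2} \geq \frac{\delta^2}{2+\delta}$; I would establish $\delta + (1-\delta)\ln(1-\delta) \geq \frac{\delta^2}{2}$ on $(0,1)$ by the same two-derivative argument, where now the second derivative is $\frac{\delta}{1-\delta} \geq 0$.

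Finally I would dispose of the remaining cases. For $\delta > 1$ the event $\{S_n \leq (1-\delta)\mu\}$ is empty because $(1-\delta)\mu < 0 \leq S_n$, so the lower tail contributes $0$; for $\delta = 1$ the lower tail is $\Pr(S_n \leq 0)$, which the Chernoff estimate bounds by $e^{-\mu}$ (letting $s \to \infty$), and $e^{-\mu} \leq e^{-\mu/3} = e^{-\mu\delta^2/(2+\delta)}$. In every case each of the two tails is at most $e^{-\mu\delta^2/(2+\delta)}$, and summing the two bounds produces the factor of $2$ in the stated conclusion.
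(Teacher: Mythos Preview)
Your argument is the standard exponential-moment proof and it is correct as written: the union bound plus Markov on $e^{\lambda S_n}$, the convexity estimate $e^{\lambda x}\le 1+x(e^\lambda-1)$ on $[0,1]$, the optimization at $\lambda=\ln(1+\delta)$, and the two calculus inequalities you check by double differentiation all go through, and your treatment of the boundary cases $\delta\ge 1$ for the lower tail is accurate.

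There is nothing to compare against, however: the paper does not prove this theorem. Chernoff's bound (like the Local Limit Theorem that follows it) is quoted there only as a ``well-known result'' to be used as a black box in the subsequent asymptotic analysis of $N(t)$. So your proposal supplies a self-contained proof where the paper simply cites the literature; what you wrote is exactly the textbook derivation and would serve perfectly well if one wanted the exposition to be self-contained.
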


\begin{thm}
(Local Limit Theorem \cite{herv}) Let $X_{1}, X_{2}, \ldots$ be i.i.d. copies of a real-valued random variable $X$ of mean $\mu$ and variance $\sigma^2$ with bounded density and a third moment. Set $S_{n} =\sum_{i = 1}^{n} X_{i}$, let $f_{n}(y)$ be the probability density function of $\frac{S_{n}-n \mu}{\sqrt{n}}$, and let $\phi(y)$ be the probability density function of the Gaussian distribution $\mathcal{N}(0, \sigma^2)$. Then $\sup_{y \in \mathbb{R}} |f_{n}(y)-\phi(y)| = O(\frac{1}{\sqrt{n}})$. 
\end{thm}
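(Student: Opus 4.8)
The plan is to prove this classical local limit theorem by the characteristic-function method, reducing the uniform density bound to a single integral estimate on Fourier transforms. First I would center the variables, replacing $X_i$ by $Y_i = X_i - \mu$, so that $Y_i$ has mean $0$, variance $\sigma^2$, finite third absolute moment $\rho_3 = E|Y_i|^3$, and $\frac{S_n - n\mu}{\sqrt n} = \frac{1}{\sqrt n}\sum_{i=1}^n Y_i$. Writing $\varphi(s) = E[e^{isY_1}]$ for the characteristic function of $Y_1$, the normalized sum $(S_n-n\mu)/\sqrt n$ then has characteristic function $\varphi(s/\sqrt n)^n$. The hypothesis that $X$ has a bounded density $p$ gives $p \in L^1 \cap L^\infty \subset L^2$, so $\varphi \in L^2(\mathbb{R})$ by Plancherel; since $|\varphi| \le 1$ this yields $|\varphi|^n \le |\varphi|^2 \in L^1$ for every $n \ge 2$. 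Hence for $n \ge 2$ both densities are represented by Fourier inversion,
\[
f_n(y) = \frac{1}{2\pi}\int_{-\infty}^{\infty} e^{-i\theta y}\,\varphi(\theta/\sqrt n)^n\,d\theta, \qquad \phi(y) = \frac{1}{2\pi}\int_{-\infty}^{\infty} e^{-i\theta y}\,e^{-\sigma^2\theta^2/2}\,d\theta,
\]
and consequently
\[
\sup_{y}|f_n(y)-\phi(y)| \le \frac{1}{2\pi}\int_{-\infty}^{\infty}\bigl|\varphi(\theta/\sqrt n)^n - e^{-\sigma^2\theta^2/2}\bigr|\,d\theta.
\]
The entire theorem then reduces to bounding this last integral by $O(1/\sqrt n)$; the single case $n=1$ contributes a finite constant that is absorbed into the $O$-term.

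Next I would split the range of $\theta$ at $|\theta| = \delta\sqrt n$ for a small fixed $\delta > 0$. On the central band $|\theta| \le \delta\sqrt n$, the finite third moment gives the Taylor estimate $\log\varphi(s) = -\tfrac{1}{2}\sigma^2 s^2 + O(|s|^3)$ for $|s| \le \delta$, whence $n\log\varphi(\theta/\sqrt n) = -\tfrac{1}{2}\sigma^2\theta^2 + R$ with $|R| \le C|\theta|^3/\sqrt n$ for a constant $C$ depending only on $\sigma^2$ and $\rho_3$. Choosing $\delta$ small enough that $C\delta \le \sigma^2/4$ forces $|R| \le \tfrac{1}{4}\sigma^2\theta^2$ throughout the band, so applying $|e^{w}-1| \le |w|e^{|w|}$ I would obtain
\[
\bigl|\varphi(\theta/\sqrt n)^n - e^{-\sigma^2\theta^2/2}\bigr| = e^{-\sigma^2\theta^2/2}\bigl|e^{R}-1\bigr| \le \frac{C|\theta|^3}{\sqrt n}\,e^{-\sigma^2\theta^2/4},
\]
and integrating this Gaussian-weighted cubic over $\mathbb{R}$ gives the target bound $O(1/\sqrt n)$ for the whole central band at once.

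On the tail $|\theta| > \delta\sqrt n$ I would substitute $s = \theta/\sqrt n$ and use that $X$ is absolutely continuous, so that $|\varphi(s)| < 1$ for all $s \neq 0$; combined with continuity of $\varphi$ and the Riemann--Lebesgue lemma ($\varphi(s)\to 0$ as $|s|\to\infty$), this yields $\rho := \sup_{|s|\ge\delta}|\varphi(s)| < 1$. Then $\sqrt n \int_{|s|\ge\delta}|\varphi(s)|^n\,ds \le \sqrt n\,\rho^{\,n-2}\,\|\varphi\|_2^2$ decays exponentially in $n$, and the matching Gaussian tail $\int_{|\theta|>\delta\sqrt n} e^{-\sigma^2\theta^2/2}\,d\theta$ is negligible, so the tail region contributes $o(1/\sqrt n)$. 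Adding the two pieces gives $\sup_y|f_n(y)-\phi(y)| = O(1/\sqrt n)$.

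The main obstacle is controlling $\varphi$ simultaneously across all frequency scales, since no single estimate works everywhere. The Taylor expansion is trustworthy only near the origin, so the delicate point is the choice of $\delta$ that makes the cubic remainder $R$ dominated by the quadratic Gaussian exponent \emph{uniformly} over the entire band $|\theta|\le\delta\sqrt n$ rather than merely on a fixed compact set; this is precisely what lets one clean bound cover both the central and the intermediate frequencies and still integrate to $O(1/\sqrt n)$. The genuine tail requires a separate input, namely that $|\varphi|$ is bounded away from $1$ off the origin, and this is exactly where absolute continuity and the bounded-density hypothesis enter: they justify the inversion representation (via $\varphi \in L^2$) and force the exponential decay of $\int_{|s|\ge\delta}|\varphi(s)|^n\,ds$.
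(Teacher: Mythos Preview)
The paper does not prove this statement at all: it is quoted as a known result from the cited reference \cite{herv} and used as a black box in the subsequent argument. There is therefore no proof in the paper to compare your proposal against.

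That said, your proposal is the standard Fourier-analytic proof of the density version of the local limit theorem with Berry--Esseen-type rate, and the outline is correct. The key inputs are exactly the ones you identify: Fourier inversion (justified via $p\in L^1\cap L^\infty\subset L^2$ and Plancherel, hence $\varphi\in L^2$ and $|\varphi|^n\in L^1$ for $n\ge 2$), the third-moment Taylor expansion $\log\varphi(s)=-\tfrac12\sigma^2s^2+O(|s|^3)$ on a neighborhood of the origin, and the bound $\sup_{|s|\ge\delta}|\varphi(s)|<1$ coming from absolute continuity together with Riemann--Lebesgue. The splitting at $|\theta|=\delta\sqrt{n}$ and the choice of $\delta$ so that the cubic remainder is dominated by $\tfrac14\sigma^2\theta^2$ on the whole central band are handled correctly, and the tail estimate $\sqrt{n}\,\rho^{\,n-2}\|\varphi\|_2^2$ is the right way to dispose of the outer region. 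One small point worth making explicit in a full write-up is that ``third moment'' should be read as finite third \emph{absolute} moment so that the remainder bound $|R|\le C|\theta|^3/\sqrt{n}$ holds with a uniform constant; you already use $\rho_3=E|Y_1|^3$, so this is only a matter of stating the hypothesis precisely.
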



\begin{thm}
$N(t) = (e-1)(t+\frac{e-2}{2})+o(1)$
\end{thm}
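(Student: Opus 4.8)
\emph{Proof proposal.} The plan is to combine Wald's equation with the Local Limit Theorem and Chernoff's bound to identify the limiting overshoot. Set $Y_i = \ln(1+(e-1)X_i)$, so that $\mu := E(Y_i) = \int_0^1 \ln(1+(e-1)x)\,dx = \frac1{e-1}$; write $S_n = Y_1+\dots+Y_n$ ($S_0 = 0$) and $K = \min\{n : S_n > t\}$. By the earlier lemma, $N(t) = (e-1)S(t)$ with $S(t) = E(S_K)$, and since $S_K = t + R_t$ for the overshoot $R_t := S_K - t$, we have $S(t) = t + E(R_t)$. As each $Y_i \le 1$ we have $0 \le R_t \le 1$, so it suffices to prove
\[
E(R_t) \longrightarrow \frac{E(Y_1^2)}{2\mu} = \frac{e-2}{2}\qquad (t\to\infty),
\]
because then $N(t) = (e-1)(t+E(R_t)) = (e-1)(t+\tfrac{e-2}{2})+o(1)$; here $E(Y_1^2) = \frac1{e-1}\int_1^e(\ln u)^2\,du = \frac{e-2}{e-1}$. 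This limit is the classical mean-residual-life statement of renewal theory.

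First I would peel off the last renewal before $t$. With $h(v) := P(Y_1 > v)$ (so $h(v) = \frac{e-e^v}{e-1}$ on $[0,1]$, $h\equiv1$ on $(-\infty,0]$, and $h\equiv0$ on $[1,\infty)$), the events $\{S_n\le t,\ Y_{n+1} > t+y-S_n\}$ are disjoint over $n\ge0$ with union $\{R_t>y\}$; conditioning on $S_n$, and noting that for $t>1$ the atom of $S_0$ at $0$ is irrelevant, one obtains for $y\in[0,1]$
\[
P(R_t>y) = \sum_{n\ge0} P\bigl(S_n\le t,\ Y_{n+1}>t+y-S_n\bigr) = \int_{t+y-1}^{t} h(t+y-s)\,u(s)\,ds,
\]
where $u(s) := \sum_{n\ge1} f_{S_n}(s)$ is the renewal density and $f_{S_n}$ the density of $S_n$. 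Everything then reduces to the claim that $u(s)\to\frac1\mu = e-1$ as $s\to\infty$.

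To prove that claim I would split the sum at $m := s/\mu$ using a window of width $m^{2/3}$ (any exponent strictly between $\tfrac12$ and $1$ works). The variable $Y_1$ has the bounded density $\frac{e^y}{e-1}\le\frac e{e-1}$ on $[0,1]$, compact support (hence all moments finite), and variance $\sigma^2 = \frac{e^2-3e+1}{(e-1)^2}>0$, so the Local Limit Theorem gives $f_{S_n}(s) = \frac1{\sqrt n}\bigl(\phi(\tfrac{s-n\mu}{\sqrt n})+\varepsilon_n\bigr)$ with $|\varepsilon_n|\le C/\sqrt n$ and $\phi$ the $\mathcal N(0,\sigma^2)$ density. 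On the central window $|n-m|\le m^{2/3}$ the errors contribute $O\bigl(\sum_{|n-m|\le m^{2/3}} n^{-1}\bigr) = O(m^{-1/3}) = o(1)$; writing $n=m+j$ one has $\tfrac{s-n\mu}{\sqrt n} = -j\mu/\sqrt m + o(1)$ and $\tfrac1{\sqrt n} = \tfrac1{\sqrt m}(1+o(1))$ uniformly, so the main part is, up to $o(1)$, the Riemann sum $\sum_{j\in\mathbb Z}\frac1{\sqrt m}\phi(j\mu/\sqrt m)$ of mesh $1/\sqrt m$, which converges to $\int_{\mathbb R}\phi(x\mu)\,dx = 1/\mu$. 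Outside the window I would use $f_{S_n}(s)\le\|f_{Y_1}\|_\infty\,P(s-1\le S_{n-1}\le s)\le\frac e{e-1}\,P\bigl(|S_{n-1}-(n-1)\mu|\ge(|n-m|-1)\mu\bigr)$ and apply Chernoff's bound: in the band $m^{2/3}\le|n-m|\le\delta m$ it gives a per-term bound $e^{-\Omega(m^{1/3})}$ that swamps the $O(m)$ terms, while for $n>(1+\delta)m$ (resp.\ $n<(1-\delta)m$) it gives $e^{-\Omega(n)}$ (resp.\ $e^{-\Omega(s)}$), whose sums are $o(1)$. Hence $u(s)\to1/\mu$.

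Granting the claim, the substitution $v=t+y-s$ turns the overshoot formula into $P(R_t>y) = \int_y^1 h(v)\,u(t+y-v)\,dv$, and since $0\le h\le1$ and $u$ is bounded near $+\infty$, dominated convergence gives $P(R_t>y)\to\frac1\mu\int_y^1 h(v)\,dv$ for each $y\in[0,1]$; bounded convergence in $y$ then yields
\[
E(R_t) = \int_0^1 P(R_t>y)\,dy \longrightarrow \frac1\mu\int_0^1\!\!\int_y^1 h(v)\,dv\,dy = \frac1\mu\int_0^1 v\,h(v)\,dv = \frac{E(Y_1^2)}{2\mu} = \frac{e-2}{2},
\]
using $\int_0^1 v\,h(v)\,dv = \tfrac12 E(Y_1^2)$ (valid as $Y_1\in[0,1]$); plugging into $N(t) = (e-1)(t+E(R_t))$ finishes the proof. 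The main obstacle is the claim $u(s)\to1/\mu$, specifically the clash of scales inside it: the Local Limit Theorem controls densities only up to an additive $O(1/\sqrt n)$, which is useless for bounding $f_{S_n}(s)$ directly once $|n-m|$ reaches the medium band (there the true density is of order $\frac1{\sqrt m}$ times an exponentially small factor, comparable to the error). This forces the detour through the Chernoff probability estimate on $P(s-1\le S_{n-1}\le s)$ and pins the window exponent to $(\tfrac12,1)$: wide enough that outside the window the Chernoff exponent $\Omega(m^{2\alpha-1})$ beats the $O(m)$-many terms, yet narrow enough that the accumulated Local-Limit errors, $O(m^{\alpha-1})$ in total, still vanish. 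Making the Riemann-sum limit and the Chernoff estimates uniform over the relevant ranges is routine but is where the actual work sits.
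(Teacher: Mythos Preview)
Your proposal is correct and uses the same toolkit as the paper (Wald's equation to reduce to the overshoot, the Local Limit Theorem for the central window, and Chernoff to kill the tails), but the organisation is genuinely different. The paper conditions on the value of the stopping time: for each $i$ it computes the density $p_i(u)$ of the \emph{undershoot} $t-Y_i$ given $K=i{+}1$, shows via the LLT that $p_i(u)\to e-e^u$ uniformly for $i$ in a $\sqrt t$-window around $(e-1)t$, integrates against the conditional overshoot $V_{i,u}$ to get $O_i(t)\to\tfrac{e-2}{2}$, and then uses Chernoff on $K$ to glue these together. You instead first aggregate all the step densities into the renewal density $u(s)=\sum_n f_{S_n}(s)$, prove the pointwise limit $u(s)\to 1/\mu$ (essentially Blackwell's theorem, derived here from the LLT plus Chernoff), and only then read off the overshoot distribution via $P(R_t>y)=\int h(v)\,u(t+y-v)\,dv$ and dominated convergence. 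Your route is more modular---it isolates a reusable renewal-density statement and then the overshoot computation is a clean two-line integral identity $E(R_t)\to\frac{1}{\mu}\int_0^1 v\,h(v)\,dv=\frac{E(Y_1^2)}{2\mu}$---while the paper's route stays closer to the specific conditioning on $K$ and avoids naming the renewal density at all. Both hit the same technical crux you flag: the LLT error $O(1/n)$ in $f_{S_n}(s)$ is only small enough inside a window of width $m^\alpha$ with $\alpha<1$, while outside that window one must switch to a probability bound (Chernoff/Hoeffding) rather than a density bound, and the window exponent must sit in $(\tfrac12,1)$ to make both ends meet.
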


\begin{proof}
As in the last proof, suppose that i.i.d. random variables $X_{1}, X_{2}, \ldots$ are chosen uniformly from $[0,1]$. Define $\mu$ to be the expected value and $\sigma^2$ to be the variance of $\ln(1+(e-1)X_{i})$ for each $i$. Define the random variable $K$ to be minimal so that $\sum_{i = 1}^{K} \ln(1+(e-1)X_{i}) > t$ and define $S(t)$ to be the expected value of $\sum_{i = 1}^{K} \ln(1+(e-1)X_{i})$. By definition, $N(t)$ is the expected value of $K$. 

By Wald's equation, $N(t) = S(t) / \mu$, so $N(t) = (e-1)S(t)$. It remains to prove that $S(t)-t = \frac{e-2}{2}+o_{t}(1)$. 

For each integer $i \geq 0$, define the random variable $Y_{i} = \sum_{j = 1}^{i} \ln(1+(e-1)X_{j})$ and let $p_{i}(u)$ be the probability density function for the random variable $U = (t-Y_{i} | Y_{i+1} > t \wedge Y_{i} \leq t)$. We will show that $p_{i}(u) = e-e^{u}+o_{t}(1)$ for $i \in [(e-1)t-c\sqrt{t},(e-1)t+c\sqrt{t}]$ for all constants $c \geq 0$. 

Define $q_{i}(y)$ to be the density function for $Y_{i}$. By Bayes' Theorem and the fact that $Pr(\ln(1+(e-1)X_{i}) \geq u) = 1-\frac{e^{u}-1}{e-1}$, $p_{i}(u) = \frac{q_{i}(t-u) (1-\frac{e^{u}-1}{e-1})}{\int_{0}^{1} q_{i}(t-y) (1-\frac{e^{y}-1}{e-1})dy}$. 

Let $\phi(x)$ be the density function of the distribution $\mathcal{N}(0, \sigma^2)$. By the local limit theorem, $p_{i}(u) = \frac{(\frac{1}{\sqrt{i}}\phi(\frac{t-u-\frac{i}{e-1}}{\sqrt{i}}) \pm O(\frac{1}{i}))(1-\frac{e^{u}-1}{e-1})}{\int_{0}^{1} (\frac{1}{\sqrt{i}}\phi(\frac{t-y-\frac{i}{e-1}}{\sqrt{i}}) \pm O(\frac{1}{i}) )(1-\frac{e^{y}-1}{e-1})dy}  = (e-1)(1-\frac{e^{u}-1}{e-1})+o_{t}(1) = e-e^{u}+o_{t}(1)$ for $i \in [(e-1)t-c\sqrt{t},(e-1)t+c\sqrt{t}]$. 

Now define the random variable $O = -t+\sum_{i = 1}^{K} \ln(1+(e-1)X_{i})$, and let $O_{i}(t)$ be the expected value of $(O | K = i+1)$. Furthermore define $V_{i,u}(t)$ to be the expected value of $(O | (U = u \wedge K = i+1))$. Then for $i \in [(e-1)t-c\sqrt{t},(e-1)t+c\sqrt{t}]$, $O_{i}(t) = \int_{0}^{1} p_{i}(u) V_{i, u}(t) du = \int_{0}^{1} (e-e^{u}+o_{t}(1))( \frac{1}{1-\frac{e^{u}-1}{e-1}} \int_{\frac{e^{u}-1}{e-1}}^{1} (\ln(1+(e-1)x)-u) dx) du = \frac{e-2}{2}+o_{t}(1)$.

For any $\epsilon > 0$, there is a constant $c = c(\epsilon) > 0$ such that $Pr(|K-1 - (e-1)t | > c \sqrt{t}) < \epsilon$ by Chernoff's bound. Therefore, there is a sequence $\epsilon_{0} > \epsilon_{1} > \epsilon_{2} > \ldots$ converging to $0$ such that $|S(t)-t -\sum_{i = \left \lfloor(e-1)t-c(\epsilon_{j})\sqrt{t}\right\rfloor}^{\left \lceil(e-1)t+c(\epsilon_{j})\sqrt{t}\right\rceil} O_{i}(t)Pr(K = i+1)| < \epsilon_{j}$. Thus $S(t)-t = \frac{e-2}{2}+o_{t}(1)$.
\end{proof}

The proof above also generalizes to other functions $f$ besides $f(x) = \ln(1+(e-1)x)$. In particular, $\ln(1+(e-1)x)$ can be replaced in the proof with an increasing bijection $f: [0,1] \rightarrow [0,1]$, thus implying Theorem \ref{maint}. 

\begin{proof}
Suppose that i.i.d. random variables $X_{1}, X_{2}, \ldots$ are chosen uniformly from $[0,1]$. Define $\mu_{f}$ to be the expected value and $\sigma_{f}^2$ to be the variance of $f(X_{i})$ for each $i$. Define the random variable $K_{f}$ to be minimal so that $\sum_{i = 1}^{K_{f}} f(X_{i}) > t$ and define $S_{f}(t)$ to be the expected value of $\sum_{i = 1}^{K_{f}} f(X_{i})$. By definition, $N_{f}(t)$ is the expected value of $K_{f}$. 

By Wald's equation, $N_{f}(t) = S_{f}(t) / \mu_{f}$. It remains to prove that $S_{f}(t)-t = \frac{\int_{0}^{1} \int_{f^{-1}(u)}^{1} (f(x)-u) dx du}{\mu_{f}}+o_{t}(1)$. 

For each integer $i \geq 0$, define the random variable $Y_{i} = \sum_{j = 1}^{i} f(X_{j})$ and let $p_{i}(u)$ be the probability density function for the random variable $U = (t-Y_{i} | Y_{i+1} > t \wedge Y_{i} \leq t)$. We will show that $p_{i}(u) = \frac{1-f^{-1}(u)}{\mu_{f}}+o_{t}(1)$ for $i \in [\frac{t}{\mu_{f}}-c\sqrt{t},\frac{t}{\mu_{f}}+c\sqrt{t}]$ for all constants $c \geq 0$. 

Define $q_{i}(y)$ to be the density function for $Y_{i}$. By Bayes' Theorem and the fact that $Pr(f(X_{i}) \geq u) = 1-f^{-1}(u)$, $p_{i}(u) = \frac{q_{i}(t-u) (1-f^{-1}(u))}{\int_{0}^{1} q_{i}(t-y) (1-f^{-1}(y))dy}$. 

Let $\phi(x)$ be the density function of the distribution $\mathcal{N}(0, \sigma_{f}^2)$. By the local limit theorem, $p_{i}(u) = \frac{(\frac{1}{\sqrt{i}}\phi(\frac{t-u-i \mu_{f}}{\sqrt{i}}) \pm O(\frac{1}{i}))(1-f^{-1}(u))}{\int_{0}^{1} (\frac{1}{\sqrt{i}}\phi(\frac{t-y-i \mu_{f}}{\sqrt{i}}) \pm O(\frac{1}{i}) )(1-f^{-1}(y))dy} = \frac{(1-f^{-1}(u))}{\int_{0}^{1}(1-f^{-1}(y))dy}+o_{t}(1) = \frac{(1-f^{-1}(u))}{\mu_{f}}+o_{t}(1)$ for $i \in [\frac{t}{\mu_{f}}-c\sqrt{t},\frac{t}{\mu_{f}}+c\sqrt{t}]$. 

Now define the random variable $O = -t+\sum_{i = 1}^{K_{f}} f(X_{i})$, and let $O_{i}(t)$ be the expected value of $(O | K_{f} = i+1)$. Furthermore define $V_{i,u}(t)$ to be the expected value of $(O | (U = u \wedge K_{f} = i+1))$. Then for $i \in [\frac{t}{\mu_{f}}-c\sqrt{t},\frac{t}{\mu_{f}}+c\sqrt{t}]$, $O_{i}(t) = \int_{0}^{1} p_{i}(u) V_{i, u}(t) du = \int_{0}^{1} (\frac{(1-f^{-1}(u))}{\mu_{f}}+o_{t}(1))( \frac{1}{1-f^{-1}(u)} \int_{f^{-1}(u)}^{1} (f(x)-u) dx) du = \frac{\int_{0}^{1} \int_{f^{-1}(u)}^{1} (f(x)-u) dx du}{\mu_{f}}+o_{t}(1)$.

For any $\epsilon > 0$, there is a constant $c = c(\epsilon) > 0$ such that $Pr(|K_{f}-1 - \frac{t}{\mu_{f}} | > c \sqrt{t}) < \epsilon$ by Chernoff's bound. Therefore, there is a sequence $\epsilon_{0} > \epsilon_{1} > \epsilon_{2} > \ldots$ converging to $0$ such that $|S_{f}(t)-t -\sum_{i = \left \lfloor \frac{t}{\mu_{f}} -c(\epsilon_{j})\sqrt{t}\right\rfloor}^{\left \lceil \frac{t}{\mu_{f}}+c(\epsilon_{j})\sqrt{t}\right\rceil} O_{i}(t)Pr(K_{f} = i+1)| < \epsilon_{j}$. Thus $S_{f}(t)-t = \frac{\int_{0}^{1} \int_{f^{-1}(u)}^{1} (f(x)-u) dx du}{\mu_{f}}+o_{t}(1)$.
\end{proof}

\end{document}